\documentclass[11pt]{amsart}
\addtolength{\oddsidemargin}{-.5in}
\addtolength{\evensidemargin}{-.5in}
\addtolength{\textwidth}{1.0in} 

\theoremstyle{plain}
\newtheorem{thm}{Theorem}[section]
\newtheorem{theorem}[thm]{Theorem}

\newtheorem{lemma}[thm]{Lemma}
\newtheorem{corollary}[thm]{Corollary}
\newtheorem{proposition}[thm]{Proposition}
\theoremstyle{definition}
\newtheorem{remark}[thm]{Remark}

\numberwithin{equation}{section}


\newcommand{\PP}{\ensuremath{\mathbb{P}}}

\newcommand{\AAA}{\ensuremath{\mathbb{A}}}
\newcommand{\CC}{\ensuremath{\mathbb{C}}}

\newcommand{\ZZ}{\ensuremath{\mathbb{Z}}}



\title [Unirationality]{Unirationality of Ueno-Campana's threefold}

\author{Fabrizio Catanese, Keiji Oguiso and Tuyen Trung Truong}

\address{}
\email{}

\address{} \email{}

\subjclass[2010]{}

\thanks{}

\begin{document}

\maketitle

\begin{abstract}
We shall prove that the threefold studied in the paper `` Remarks on an Example of K. Ueno" by F. Campana is unirational. 
This gives an affirmative answer to a question posed in the paper 
above and also in the book by K. Ueno, ``Classification theory of algebraic varieties and compact complex spaces". 
\end{abstract}

\section{Introduction}

Let $k$ be any field of characteristic $\not= 2$ containing a primitive fourth root of unity $\sqrt{-1}$. We shall work over $k$ unless otherwise stated. Let 
$[x : y : z]$ be the homogeneous coordinates of ${\PP}^2$ and let
$$C := (y^2z = x(x^2 -z^2)) \subset {\PP}^2$$
be the harmonic elliptic curve, having an automorphism $g$ of order $4$ defined by
$$g^*(x : y : z) = (-x : \sqrt{-1}y :z)\,\, $$
whose quotient is $\PP^1$.
When $k$ is the complex number field ${\CC}$, we have 
$$(C, g) \simeq 
(E_{\sqrt{-1}}, \sqrt{-1})\,\, ,$$ 
where $E_{\sqrt{-1}} = {\CC}/ ({\ZZ} + \sqrt{-1}{\ZZ})$, the elliptic curve of period $\sqrt{-1}$ and $\sqrt{-1}$ is the automorphism induced by  multiplication by $\sqrt{-1}$ on ${\mathbf C}$. This is because the complex elliptic curve with an automorphism of order $4$ acting on the space of global holomorphic $1$-forms as $\sqrt{-1}$ is unique up to isomorphism. 

Let $(C_j, g_i)$ ($j = 1,2,3$) be  three copies of $(C, g)$. Let 
$$Z = C_1 \times C_2 \times C_3\,\, .$$ 
For simplicity, we denote the automorphism of $Z$ defined by $(g_1, g_2, g_3)$ by the same letter $g$. Then $g$ is an automorphism of $Z$ of order $4$ and the quotient threefold 
$$Y := (C_1 \times C_2 \times C_3 )/ \langle g \rangle$$
has $8$ singular points of type 
$(1,1,1)/4$ and $28$ singular points of type $(1,1,1)/2$. Let $X$ be the blow up of $Y$ at the maximal ideals of these singular points. Then $X$ is a smooth projective threefold defined over $k$. In his paper \cite{Ca12}, F. Campana proved that $X$ is a rationally connected threefold when $k = \CC$. We shall call $X$ the  {\it Ueno-Campana's threefold}. 

In \cite[Question 4]{Ca12}, F. Campana asked if $X$ is rational or unirational (at least over ${\CC}$)? See also \cite[Page 208]{Ue75} for this Question and \cite{OT13} for a relevant example and application to complex dynamics. The aim of this short note is to give an affirmative answer to this question:

\begin{theorem}\label{main1}
Ueno-Campana's threefold $X$ is unirational, i.e., there is a dominant rational map ${\PP}^3 \cdots\rightarrow X$. 
\end{theorem}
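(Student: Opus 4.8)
The plan is to reduce to the singular quotient $Y$, make the cyclic quotient completely explicit, and identify a birational model of $Y$ that is manifestly unirational.

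\textbf{Reduction.} Since $X\to Y$ is a birational morphism (a blow-up of points) and unirationality is a birational invariant, it suffices to produce a dominant rational map $\PP^3\dashrightarrow Y$, equivalently to embed $k(Y)=k(C_1\times C_2\times C_3)^{\langle g\rangle}$ into a purely transcendental field of transcendence degree $3$ over $k$.

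\textbf{Explicit model.} In the affine model $C_j:\ y_j^2=f(x_j)$ with $f(x)=x^3-x$, the automorphism $g$ acts by $(x_j,y_j)\mapsto(-x_j,\sqrt{-1}\,y_j)$, and $g^2=(-1,-1,-1)$ is the product of the three hyperelliptic involutions. The map $C_1\times C_2\times C_3\to(\PP^1)^3$ given by the $x$-coordinates is Galois with group $(\ZZ/2)^3$, and passing to $g^2$-invariants (namely $k(x_1,x_2,x_3)$ together with $u=y_1y_2$, $v=y_2y_3$) shows that $Z':=(C_1\times C_2\times C_3)/\langle g^2\rangle$ is birational to the complete intersection $\{u^2=f(x_1)f(x_2),\ v^2=f(x_2)f(x_3)\}$, on which $g$ descends to the involution $(x_1,x_2,x_3,u,v)\mapsto(-x_1,-x_2,-x_3,-u,-v)$, so that $Y=Z'/\langle g\rangle$. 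In the $g$-invariant coordinates $a=x_1/x_2$, $b=x_3/x_2$, $s=u/x_1$, $t=v/x_1$, $\xi=x_2^2$ the two equations become
$$s^2a=(a^2\xi-1)(\xi-1),\qquad t^2a^2=b(\xi-1)(b^2\xi-1);$$
dividing them eliminates $\xi$ rationally, $\xi=(t^2a-s^2b)/(t^2a^3-s^2b^3)$, and substituting back, with $\rho=t/s$ and $w=s\,a\,(a^3\rho^2-b^3)$, a computation gives
$$k(Y)\ \simeq\ k(a,b,\rho)\big[w\big]\big/\big(w^2-ab(b^2-a^2)\bigl(f(b)-f(a)\rho^2\bigr)\big).$$
Thus $Y$ is birational to the double cover of $\PP^3$ branched along $\{w^2=ab(b^2-a^2)(f(b)-f(a)\rho^2)\}$; and since for fixed $(a:b)$ this is the conic $w^2+ab(b^2-a^2)f(a)\,\rho^2=ab(b^2-a^2)f(b)$ in $(w,\rho)$, the threefold $Y$ is birational to a conic bundle over $\PP^2_{(a:b)}$.

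\textbf{Unirationality of the model, and the main difficulty.} It remains to prove this conic bundle is unirational. One cannot simply split the conic over a rational base: the quadratic extension $k(a,b)\bigl(\sqrt{-ab(b^2-a^2)f(a)}\,\bigr)$ which makes it isotropic is the function field of an isotrivial ($j=1728$) elliptic surface over $\PP^2$, not of a rational surface — this is exactly why $Y$ is only ``barely'' irrational and why the assertion is unirationality rather than rationality. Instead one exploits the explicit geometry: the branch surface is reducible, consisting of four planes of the pencil $\{ab(b^2-a^2)=0\}$ (all through the line $a=b=0$) together with the hypersurface $\{f(b)=f(a)\rho^2\}$, so a Cremona transformation absorbing the planar components simplifies the model substantially; and restricting the conic bundle over $\PP^2_{(a:b)}$ to the pencil of lines through a general point yields, over each line, a conic bundle over $\PP^1$ — hence a rational surface — with a section supplied by Tsen's theorem over $k(\PP^1)$. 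Assembling these sections into a single rational parametrization is the crux of the argument and the one step demanding genuine care (a careless version of it would prove rationality, which presumably fails); carrying that construction through the explicit formulas — with $A=ab^2(b^2-a^2)(b^2-1)$ and $B=a^2b(b^2-a^2)(a^2-1)$ for the conic $w^2+B\rho^2=A$ — is where the real work lies.
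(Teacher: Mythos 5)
Your reduction to a conic bundle model is sound (and in fact your formula $w^2=ab(b^2-a^2)\bigl(f(b)-f(a)\rho^2\bigr)$ checks out; it is a close relative of the paper's model, which works with the invariants $b_j=x_j/x_1$, $a_j=y_j/y_1$ and arrives at the hypersurface $(a^2-b)(\alpha^2-\beta^3)=(\alpha^2-\beta)(a^2-b^3)$, a conic bundle over ${\rm Spec}\,k[b,\beta]$). But the statement to be proved is unirationality, and that is precisely the step you do not carry out: your final paragraph openly defers it (``the crux of the argument,'' ``where the real work lies''). The Tsen/pencil-of-lines sketch, as stated, only shows that the threefold is covered by rational surfaces — one for each line in the base — which over $\CC$ is essentially Campana's rational connectedness and does not produce a dominant rational map from ${\PP}^3$; making those fibrewise sections vary algebraically with the line is exactly the unresolved issue, so the proposal has a genuine gap at its central point.

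Moreover, the heuristic by which you steer away from the correct route is mistaken. You argue that ``one cannot simply split the conic over a rational base'' because the quadratic extension $k(a,b)\bigl(\sqrt{-ab(b^2-a^2)f(a)}\bigr)$ is the function field of a non-rational surface. That only rules out one particular splitting field; isotropy of the conic can be achieved after many other dominant rational base changes, and for unirationality all one needs is a rational base dominating ${\PP}^2$ over which the pulled-back conic has a point. This is exactly what the paper does: pulling back its conic bundle by the degree-$4$ map $(s,t)\mapsto(b,\beta)=(s^2,t^2)$ — whose source is again a rational surface — the equation becomes $(a^2-s^2)(\alpha^2-t^6)=(\alpha^2-t^2)(a^2-s^6)$, which has the evident section $(a,\alpha)=(s,t)$. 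Hence the pullback $Q$ is a rational threefold mapping onto $H$ with degree $4$, and $X$ is unirational. So the missing step is not a delicate assembly of Tsen sections but a one-line base change; your proposal both omits the needed argument and incorrectly suggests that this simpler mechanism is unavailable.
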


We shall show that $X$ is birationally equivalent to the Galois quotient of a conic bundle over ${\PP}^2$ {\it with} a rational section, while $X$ itself is birationally equivalent to a conic bundle over ${\PP}^2$ {\it without} any rational section.

We are still working on the question whether $X$ is a rational variety.

{\bf Aknowledgement.} We would like to express our thanks to Professor De-Qi Zhang for his invitation to Singapore where the initial idea of this note grew 
 up. 

\section{Proof of Theorem (\ref{main1})}

The curves $(C_i, g_i)$ ($i =1,2,3$) are birationally equivalent to $(C_i^0, g_i)$, where $C_i^0$ is the curve in the affine space ${\AAA}^2 = {\rm Spec}\, k[X_i, Y_i]$, and $g_i$ is the automorphism of $C_i^0$, defined by
$$Y_i^2 = X_i(X_i^2 -1)\,\, ,\,\, g_i^*Y_i = \sqrt{-1}Y_i\,\, ,\,\, g_i^*X_i = -X_i\,\, .$$
The affine coordinate ring $k[C_i^0]$ of $C_i^0$ is 
$$k[C_i^0] = k[X_i, Y_i]/(Y_i^2 -X_i(X_i^2 -1)) .\, ,$$ 
WE set  $x_i : = X_i\, {\rm mod}\, (Y_i^2 -X_i(X_i^2 -1)), y_i : = Y_i\, {\rm mod}\, (Y_i^2 -X_i(X_i^2 -1))$. We note that $y_i^2 = x_i(x_i^2 -1)$, $g^*y_i = \sqrt{-1}y_i$, $g^*x_i = -x_i$ in $k[C_i^0]$. 

Then $(Z = C_1 \times C_2 \times C_3, g = (g_1, g_2, g_3))$ is birationally equivalent to the affine threefold 
$$V := C_1^0 \times C_2^0 \times C_3^0$$
with automorphism $(g_1, g_2, g_3)$, which we denote by the same letter $g$, and with affine coordinate ring
$$k[V] = k[C_i^0] \otimes k[C_2^0] \otimes k[C_3^0] \ {\rm generated \ by } \ x_1, x_2, x_3, y_1, y_2, y_3\,\, .$$
The rational function field $k(Z)$ of $Z$ is
$$k(Z) = k(V) = k(x_1, x_2, x_3, y_1, y_2, y_3)\,\, .$$
In both $k[V]$ and $k(Z)$, we have 
$${\rm (I)}\,\, y_i^2 = x_i(x_i^2 -1)\,\, ,$$ 
$${\rm (II)}\,\, g^*y_i = \sqrt{-1}y_i\,\, ,\,\, g^*x_i = -x_i\,\, .$$
Since $X$ is birationally equivalent to $V/\langle g \rangle$, the rational function field $K(X)$ of $X$ is identified with the invariant subfield $k(Z)^g$ of $k(Z)$, 
i.e., 
$$k(X) = k(Z)^g = \{f \in k(Z)\, \vert g^*f = f\}\,\, .$$
Consider the following elements in $k(Z)$:
$${\rm (III)}\,\, b_2 := \frac{x_2}{x_1}\,\, ,\,\, b_3 := \frac{x_3}{x_1}\,\, ,\,\, a_2 := \frac{y_2}{y_1}\,\, ,\,\, a_3 := \frac{y_3}{y_1}\,\, ,$$
$${\rm (IV)}\,\, u_1 := x_1^2\,\, ,\,\, w_1 := y_1^4\,\, ,\,\, \lambda_1 := x_1y_1^2\,\, ,$$
and define the subfield $L$ of $k(Z)$ by
$$L := k(b_2, b_3, a_2, a_3, u_1, w_1, \lambda_1)\,\, .$$
Here we used the fact that $x_1 \not= 0$, $y_1 \not= 0$ in $k(Z)$.

\begin{lemma}\label{lem1}
$k(X) = L$ in $k(Z)$.
\end{lemma}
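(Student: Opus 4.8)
The plan is to prove the two inclusions $L\subseteq k(X)$ and $k(X)\subseteq L$, the first by a direct invariance check and the second by a degree count in the tower $L\subseteq k(X)=k(Z)^g\subseteq k(Z)$. For the easy inclusion I would apply $g^*$ to each of the seven generators in (III) and (IV), using (I)--(II): $g^*b_j=(-x_j)/(-x_1)=b_j$ and $g^*a_j=(\sqrt{-1}y_j)/(\sqrt{-1}y_1)=a_j$ for $j=2,3$; $g^*u_1=(-x_1)^2=u_1$; $g^*w_1=(\sqrt{-1}y_1)^4=y_1^4=w_1$; and $g^*\lambda_1=(-x_1)(\sqrt{-1}y_1)^2=x_1y_1^2=\lambda_1$. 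Since $g^*$ is a field automorphism, every element of $L$ is then $g$-invariant, so $L\subseteq k(Z)^g=k(X)$.

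Next I would bound $[k(Z):L]$ from above by exhibiting $k(Z)$ as a tower of two quadratic extensions of $L$. Because $u_1=x_1^2\in L$, the element $x_1$ is a root of $T^2-u_1\in L[T]$, so $[L(x_1):L]\le 2$; and by (I) we have $y_1^2=x_1(x_1^2-1)\in L(x_1)$, so $y_1$ is a root of $T^2-x_1(x_1^2-1)\in L(x_1)[T]$ and $[L(x_1,y_1):L]\le 4$. Moreover $x_2=b_2x_1$, $x_3=b_3x_1$, $y_2=a_2y_1$ and $y_3=a_3y_1$ all lie in $L(x_1,y_1)$, whence $k(Z)=k(x_1,x_2,x_3,y_1,y_2,y_3)=L(x_1,y_1)$ and therefore $[k(Z):L]\le 4$.

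Finally I would compute $[k(Z):k(X)]$. Since $k$ has characteristic $\neq 2$, the automorphism $g$ has order exactly $4$ on $k(Z)$: one has $g^*x_1=-x_1\ne x_1$, so $g\ne\mathrm{id}$, and $(g^2)^*y_1=-y_1\ne y_1$, so $g^2\ne\mathrm{id}$; as the order divides $4$, it is $4$. Thus $\langle g\rangle\cong\Z/4$ acts faithfully on $k(Z)$, and by Artin's theorem $[k(Z):k(Z)^g]=|\langle g\rangle|=4$, i.e. $[k(Z):k(X)]=4$. Combining this with $L\subseteq k(X)\subseteq k(Z)$ and the multiplicativity of degrees, $4=[k(Z):k(X)]=[k(Z):L]/[k(X):L]$, and since $[k(Z):L]\le 4$ this forces $[k(X):L]=1$, i.e. $k(X)=L$. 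The only points that require genuine care are the explicit recovery of all six coordinate functions $x_i,y_i$ inside $L(x_1,y_1)$ and the faithfulness of $\langle g\rangle$ (so that Artin's theorem contributes the full factor $4$); once these are in hand, the conclusion is immediate from the degree bookkeeping, so I do not expect a serious obstacle here.
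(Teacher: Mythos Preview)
Your argument is correct and follows essentially the same route as the paper: check $g$-invariance of the generators, bound $[k(Z):L]\le 4$, and combine with $[k(Z):k(Z)^g]=4$ to conclude. The only cosmetic difference is that the paper realizes $k(Z)=L(y_1)$ in one step via $x_1=\lambda_1/y_1^2$ and $y_1^4=w_1$, whereas you build the tower $L\subset L(x_1)\subset L(x_1,y_1)$; both give the same degree bound.
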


\begin{proof} By (II) and (III), $b_2$, $b_3$, $a_2$, $a_3$, $u_1$, $w_1$, $\lambda_1$ are $g$-invariant. Hence 
$${\rm (V)}\,\, L \subset k(X) \subset k(V)\,\, .$$
Note that $k(Z) = L(y_1)$. This is because 
$$x_1 = \frac{\lambda_1}{y_1^2}\,\, ,\,\, x_2 = b_2x_1, x_3 = b_3x_1\,\, ,\,\, 
y_2 = a_2y_1\,\, ,\,\, y_3 = a_3y_1\,\, ,$$
by (III) and (IV). Since $y_1^4 = w_1$ and $w_1 \in k(Z)$, it follows that 
$${\rm (VI)}\,\, [k(Z) : L] \le 4\,\, ,$$
where $[k(Z) : L]$ is the degree of the field extension $L \subset k(Z)$, i.e., the dimension of $k(Z)$ being naturally regarded as the vector space over $L$. 

On the other hand, the group $\langle g \rangle \subset {\rm Gal}(k(Z)/k(X))$ 
is of order $4$. Thus, by the fundamental theorem of Galois theory, we have that 
$${\rm (VII)}\,\, [k(Z) : k(X)] = [K(Z) : k(Z)^g] =\, {\rm ord}\,(g) 
= 4\,\, .$$
The result now follows from (V), (VI), (VII). Indeed, by (V), we have 
$$[k(Z) : L] = [k(Z) : k(X)][k(X) :L]\,\, .$$
On the other hand, $[k(Z) : L ] \le 4$ by (VI), and $[k(X):L]\geq 1$. Hence $[k(X) : L ] = 1$ by (VII). 
This means that $L = k(X)$ in $k(Z)$, as claimed.
\end{proof}

\begin{lemma}\label{lem2}
$L = k(u_1, b_2, b_3, a_2, a_3)$ in $k(Z)$.
\end{lemma}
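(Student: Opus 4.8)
We must show that the field
$$L = k(b_2, b_3, a_2, a_3, u_1, w_1, \lambda_1)$$
coincides with the smaller-looking field $L' := k(u_1, b_2, b_3, a_2, a_3)$. Since $L' \subseteq L$ by construction, it suffices to express the two apparently extra generators $w_1 = y_1^4$ and $\lambda_1 = x_1 y_1^2$ in terms of $u_1, b_2, b_3, a_2, a_3$.

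**Key idea: use relation (I) for $i=1$.** The defining equation $y_1^2 = x_1(x_1^2 - 1)$ is the engine. Squaring gives $y_1^4 = x_1^2(x_1^2-1)^2$, i.e.
$$w_1 = u_1(u_1 - 1)^2,$$
which lies in $k(u_1) \subseteq L'$. That disposes of $w_1$. For $\lambda_1$, I would first note $\lambda_1 = x_1 y_1^2 = x_1 \cdot x_1(x_1^2-1) = x_1^2(x_1^2-1) = u_1(u_1-1)$, again an element of $k(u_1) \subseteq L'$.

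**Wrapping up.** Hence both $w_1$ and $\lambda_1$ are polynomials in $u_1$ with coefficients in $k$, so $L = k(b_2, b_3, a_2, a_3, u_1, w_1, \lambda_1) \subseteq L' = k(u_1, b_2, b_3, a_2, a_3)$, and combined with the trivial reverse inclusion we get equality. The only subtlety is to make sure relation (I) is being applied inside $k(Z)$, which is legitimate since (I) holds in $k(Z)$ by the setup; there is no division by zero issue here since we are only multiplying. I expect no real obstacle — this lemma is a bookkeeping simplification, reducing the generating set of $k(X)$ before the next step presumably exhibits $k(X)$ as a conic bundle over $k(u_1, b_2, b_3)$ or similar.
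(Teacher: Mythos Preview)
Your proof is correct and follows essentially the same approach as the paper: one inclusion is trivial, and for the other you use relation (I) for $i=1$ to compute $w_1 = u_1(u_1-1)^2$ and $\lambda_1 = u_1(u_1-1)$, exactly as the paper does.
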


\begin{proof} Since $u_1, b_2, b_3, a_2, a_3 \in L$, it follows that $k(u_1, b_2, b_3, a_2, a_3) \subset L$. 
Let us show $L \subset k(u_1, b_2, b_3, a_2, a_3)$. For this, it suffices to show that $w_1, \lambda_1 \in k(u_1, b_2, b_3, a_2, a_3)$. 

Recall by (I), $y_1^2 = x_1(x_1^2 -1)$, Hence taking the square and using (VI), we obtain that
$${\rm (VIII)}\,\, w_1 = y_1^4 = x_1^2(x_1^2 -1)^2 = u_1(u_1 -1)^2\,\, .$$
Hence $w_1 \in k(u_1, b_2, b_3, a_2, a_3)$. From $y_1^2 = x_1(x_1^2 -1)$ again, we have that
$${\rm (IX)}\,\, \lambda_1 = x_1y_1^2 = x_1^2(x_1^2 -1) = u_1(u_1 -1)\,\, .$$
Hence $\lambda_1 \in k(u_1, b_2, b_3, a_2, a_3)$ as well.
\end{proof}

\begin{lemma}\label{lem3} Let $j = 2, 3$. Then, 
$a_j^2 - b_ j \not= 0$ in both $k(Z)$ and $k(X)$. 
\end{lemma}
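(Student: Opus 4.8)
The plan is to reduce the claim to an evident non-vanishing statement about the coordinate functions $x_1,x_2,x_3$, using only the relations (I), (II), (III) and the product structure of $k(Z)$.

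First I would rewrite $a_j^2-b_j$ as a single rational function in the $x_i$. By (III) and (I) one has
$$a_j^2=\frac{y_j^2}{y_1^2}=\frac{x_j(x_j^2-1)}{x_1(x_1^2-1)}\,,\qquad b_j=\frac{x_j}{x_1}\,,$$
so that
$$a_j^2-b_j=\frac{x_j}{x_1}\left(\frac{x_j^2-1}{x_1^2-1}-1\right)=\frac{x_j(x_j^2-x_1^2)}{x_1(x_1^2-1)}=\frac{x_j(x_j-x_1)(x_j+x_1)}{x_1(x_1^2-1)}\,.$$
Hence $a_j^2-b_j=0$ in $k(Z)$ if and only if one of $x_j$, $x_j-x_1$, $x_j+x_1$ is zero in $k(Z)$.

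Next I would invoke the structure of $k(Z)$ as computed earlier. Since $k(C_i^0)=k(x_i)[y_i]/(y_i^2-x_i(x_i^2-1))$ is a quadratic extension of the rational function field $k(x_i)$, the subring $k[x_1,x_2,x_3]\subset k(Z)$ is a polynomial ring in three independent variables; in particular $x_1,x_2,x_3$ are algebraically independent over $k$. Consequently, for $j\neq 1$, none of $x_j$, $x_j-x_1$, $x_j+x_1$ vanishes in $k(Z)$, and so $a_j^2-b_j\neq 0$ in $k(Z)$. Finally, since $a_j$ and $b_j$ are $g$-invariant by (II)–(III), the element $a_j^2-b_j$ lies in $k(X)=k(Z)^g$; being nonzero in the overfield $k(Z)$, it is a fortiori nonzero in $k(X)$.

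The only step that merits a word of justification — and it is hardly an obstacle — is that $x_1,x_2,x_3$ are algebraically independent in $k(Z)$, i.e.\ that the three elliptic function fields are amalgamated freely over $k$; this is immediate from $k(V)=k(C_1^0)\otimes_k k(C_2^0)\otimes_k k(C_3^0)$ and the fact that each $k(x_i)\subset k(C_i^0)$ is purely transcendental of degree one. Everything else is the one-line computation displayed above.
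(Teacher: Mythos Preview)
Your proof is correct and follows essentially the same route as the paper's: both compute $a_j^2-b_j = \dfrac{x_j}{x_1}\Bigl(\dfrac{x_j^2-1}{x_1^2-1}-1\Bigr)$ via (I) and (III), then argue the result is nonzero because $x_j \neq \pm x_1$ in $k(Z)$. The only cosmetic difference is in justifying this last inequality---you invoke the algebraic independence of $x_1,x_2,x_3$ directly, while the paper instead observes that $x_1$ vanishes identically on $\{0\}\times C_2\times C_3$ but $\pm x_j$ does not.
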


\begin{proof} By using (I), we obtain that 
$${\rm (X)} \ a_j^2 - b_ j = \frac{y_j^2}{y_1^2} - \frac{x_j}{x_1} = \frac{x_j(x_j^2-1)}{x_1(x_1^2-1)} - \frac{x_j}{x_1} = \frac{x_j}{x_1}(\frac{x_j^2-1}{x_1^2 -1} -1)\,\, ,$$
in $k(Z)$. Recall that $x_i \not= 0$ in $k(Z)$. Thus, if $a_j^2 - b_j = 0$ in $k(Z)$, then we would have $(x_j^2-1)/(x_1^2 -1) = 1$ in $K(Z) = k(V)$ 
from the equality above, and 
therefore, $x_j = \pm x_1$ in 
$k[V]$. However, this contradicts to the fact that $x_1$ is identically $0$ on 
the set of $\overline{k}$-valued points $(\{0\} \times C_2 \times C_3)(\overline{k})$ but $\pm x_j$ ($j =2, 3$) are not identically $0$ on it. This contradiction implies that $a_j^2 - b_j \not= 0$ in 
$k(Z)$. Since $a_j^2 - b_j \in k(Z)^g = k(X)$ and $k(X)$ is a subfield of 
$k(Z)$, it follows that $a_j^2 - b_j \not= 0$ in $k(X)$ as well. 
\end{proof}

\begin{proposition}\label{field}
$k(X) = L = k(b_2, b_3, a_2, a_3)$ in $k(Z)$. More precisely, in $k(Z)$, 
we have
$${\rm (XI)}\,\, u_1 = \frac{a_2^2 - b_2}{a_2^2 - b_2^3} = \frac{a_3^2 - b_3}{a_3^2 - b_3^3}\,\, .$$
\end{proposition}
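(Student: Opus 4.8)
The plan is to exploit Lemmas \ref{lem1} and \ref{lem2}, which together already give $k(X) = L = k(u_1, b_2, b_3, a_2, a_3)$ inside $k(Z)$. So the proposition follows once we show that $u_1$ is a rational function of $b_2, b_3, a_2, a_3$ alone, and the explicit identity (XI) does precisely this. Everything then reduces to a short direct computation in the function field $k(Z)$ using only the relations (I), (III) and (IV).

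First I would rewrite equation (X) from the proof of Lemma \ref{lem3} as
$$a_j^2 - b_j = \frac{x_j}{x_1}\cdot\frac{x_j^2 - x_1^2}{x_1^2-1}\qquad (j = 2,3),$$
and then substitute $x_j = b_j x_1$ and $u_1 = x_1^2$ to obtain $a_j^2 - b_j = b_j(b_j^2-1)\,u_1/(u_1-1)$. Subtracting $b_j^3 - b_j = b_j(b_j^2-1)$ from both sides yields
$$a_j^2 - b_j^3 = b_j(b_j^2-1)\Bigl(\frac{u_1}{u_1-1}-1\Bigr) = \frac{b_j(b_j^2-1)}{u_1-1},$$
so that dividing the two displays gives $u_1 = (a_j^2 - b_j)/(a_j^2 - b_j^3)$ for $j = 2, 3$, which is exactly (XI).

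For the division to be legitimate one must check that the denominator $a_j^2 - b_j^3 = b_j(b_j^2-1)/(u_1-1)$ is nonzero in $k(Z)$: here $b_j = x_j/x_1 \neq 0$ since the $x_i$ are nonzero in $k(Z)$; $u_1 - 1 = x_1^2 - 1 \neq 0$ since $x_1^2-1$ is not identically zero on $C_1^0$; and $b_j^2 - 1 \neq 0$, for otherwise the first identity would force $a_j^2 - b_j = 0$, contradicting Lemma \ref{lem3}. This is the only step where any care is needed. Once (XI) is established, $u_1 \in k(b_2, b_3, a_2, a_3)$, hence $L = k(b_2, b_3, a_2, a_3)$ by Lemma \ref{lem2}, and therefore $k(X) = k(b_2, b_3, a_2, a_3)$ by Lemma \ref{lem1}. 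Thus the main (and essentially only) obstacle is the nonvanishing of $a_j^2 - b_j^3$, which is where Lemma \ref{lem3} enters; the rest is formal algebra on the defining equations.
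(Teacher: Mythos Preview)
Your proof is correct and follows essentially the same approach as the paper: reduce to establishing (XI) via Lemmas \ref{lem1} and \ref{lem2}, derive the relation $u_1(a_j^2 - b_j^3) = a_j^2 - b_j$ from the defining equations (I), (III), (IV), and invoke Lemma \ref{lem3} to justify the division. The only cosmetic difference is that you start from the intermediate identity (X) and compute $a_j^2 - b_j$ and $a_j^2 - b_j^3$ separately before taking their quotient, whereas the paper multiplies $y_j^2 = x_j(x_j^2-1)$ by $x_1$ directly to reach $u_1(a_j^2 - b_j^3) = a_j^2 - b_j$ in one line; the nonvanishing argument is likewise the same (if $a_j^2 - b_j^3 = 0$ then $a_j^2 - b_j = 0$, contradicting Lemma \ref{lem3}).
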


\begin{proof} By Lemma (\ref{lem1}, \ref{lem2}), it suffices to show the equality (X) in $k(Z)$. 
Observe that, for $j=2,3$:

$$   \ y_j^2 = x_j (x_j^2 -1) \Leftrightarrow y_1^2 a_j^2 = x_1 b_j (x_1^2 b_j^2 -1)$$ 
hence multiplication by $x_1$ yields 
 $$  \  x_1^2  b_j (x_1^2 b_j^2 -1) = x_1 y_1^2 a_j^2 = x_1^2 (x_1^2 -1) a_j^2 , $$
 and dividing by $x_1^2$ and observing that $u_1 = x_1^2$ we obtain
 
 $$ b_j (u_1 b_j^2 -1) = (u_1 - 1) a_j^2$$
 i.e.,
 
 $$   (**) \ u_1 (a_j^2 - b_j^3) = a_j^2  - b_j.$$

Using the previous lemma we obtain $(a_j^2 - b_j^3) \neq 0$, so we can divide and obtain  (XI).

\end{proof}

\begin{proposition}\label{hypersurface}
$X$ is birationally equivalent to the affine hypersurface $H$ in ${\AAA}^4 = {\rm Spec}\, k[a, b, \alpha, \beta]$, defined by
$$(a^2 - b)(\alpha^2- \beta^3) = (\alpha^2 - \beta)(a^2 - b^3)\,\, ,$$
or equivalently defined by
$$a^2\beta(1-\beta^2) = \alpha^2b(1-b^2) +b\beta(b^2 -\beta^2)\,\, ,$$
\end{proposition}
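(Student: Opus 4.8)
The plan is to use the field-theoretic description already established. By Lemma~\ref{lem1}, Lemma~\ref{lem2} and Proposition~\ref{field}, we know $k(X) = k(b_2, b_3, a_2, a_3)$ inside $k(Z)$, and moreover the generators $b_2, b_3, a_2, a_3$ satisfy exactly one relation coming from the two expressions for $u_1$ in formula~(XI): namely $(a_2^2 - b_2)/(a_2^2 - b_2^3) = (a_3^2 - b_3)/(a_3^2 - b_3^3)$. Clearing denominators — which is legitimate because Lemma~\ref{lem3} (together with~$(**)$) guarantees $a_j^2 - b_j^3 \neq 0$ — this relation becomes precisely $(a_2^2 - b_2)(a_3^2 - b_3^3) = (a_3^2 - b_3)(a_2^2 - b_2^3)$. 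So the first step is to define the ring homomorphism $k[a,b,\alpha,\beta] \to k(Z)$ by $a \mapsto a_2$, $b \mapsto b_2$, $\alpha \mapsto a_3$, $\beta \mapsto b_3$, observe its image generates $k(X)$, and observe the hypersurface polynomial $(a^2-b)(\alpha^2-\beta^3) - (\alpha^2-\beta)(a^2-b^3)$ lies in its kernel.

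Next I would identify the function field of the hypersurface $H$ with $k(X)$. For this it suffices to check two things: first, that the displayed polynomial $F := (a^2-b)(\alpha^2-\beta^3) - (\alpha^2-\beta)(a^2-b^3)$ is irreducible in $k[a,b,\alpha,\beta]$ (so that $H$ is an integral hypersurface and $k(H)$ makes sense as the fraction field of $k[a,b,\alpha,\beta]/(F)$); and second, that the induced map $k(H) \to k(X) \subset k(Z)$ is an isomorphism. Surjectivity is immediate since $a_2,b_2,a_3,b_3$ generate $k(X) = L$ by Proposition~\ref{field}. For injectivity, since $k(H)$ is the fraction field of the domain $k[a,b,\alpha,\beta]/(F)$ and the map is a nonzero ring homomorphism from a domain, it is automatically injective once we know $(F)$ is the whole kernel; equivalently, a dimension count suffices: both $k(H)$ and $k(X)$ have transcendence degree $3$ over $k$ (for $k(X)$ because $\dim X = 3$; for $k(H)$ because $H$ is a hypersurface in $\A^4$), and a dominant map between irreducible varieties of the same dimension with $k(H) \to k(X)$ a field homomorphism is generically finite, but here it is also surjective on function fields, forcing it to be an isomorphism provided $F$ generates a prime ideal. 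So the one genuine computation is the irreducibility of $F$.

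The irreducibility of $F$ is the main obstacle, though a routine one. I would argue as follows: regard $F$ as a polynomial in $a^2$ and $\alpha^2$, i.e. write $F = (a^2-b)(\alpha^2-\beta^3) - (\alpha^2-\beta)(a^2-b^3)$ and expand in the variables $s := a^2$, $t := \alpha^2$; one finds $F = s\,t - s\beta^3 - b\,t + b\beta^3 - s\,t + s\beta\cdot? $ — more carefully, $F$ is linear in $s$ and in $t$ separately, with $F = s(\beta - \beta^3) - t(b - b^3) + b\beta^3 - b^3\beta = s\beta(1-\beta^2) - t\, b(1-b^2) + b\beta(\beta^2 - b^2)$, which matches the second displayed form. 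Viewing $F$ as a polynomial of degree $2$ in $a$ over $k(b,\alpha,\beta)$, it is $\beta(1-\beta^2)\,a^2 - \big(\alpha^2 b(1-b^2) + b\beta(b^2-\beta^2)\big)$, which is irreducible over $k(b,\alpha,\beta)$ iff its constant-to-leading ratio is not a square in $k(b,\alpha,\beta)$; since $\beta(1-\beta^2)$ is squarefree in $k[b,\alpha,\beta]$ while the other term is a genuinely different polynomial, the ratio is not a square (one can check the numerator and denominator share no common factor and the numerator is not a square times the denominator by inspecting degrees in $\alpha$). Hence $F$ is irreducible. Finally, the equivalence of the two displayed equations for $H$ is the purely algebraic identity just used, $(a^2-b)(\alpha^2-\beta^3) - (\alpha^2-\beta)(a^2-b^3) = a^2\beta(1-\beta^2) - \alpha^2 b(1-b^2) - b\beta(b^2-\beta^2)$, obtained by expanding and regrouping, so no separate argument is needed there beyond writing it out.
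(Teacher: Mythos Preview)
Your proposal is correct and follows essentially the same route as the paper: both arguments reduce the birational identification $k(X)\simeq k(H)$ to the single computation that the polynomial $\beta(1-\beta^2)\,a^2 - \bigl(\alpha^2 b(1-b^2)+b\beta(b^2-\beta^2)\bigr)$ is irreducible over $k(b,\alpha,\beta)$, i.e.\ that the constant-to-leading ratio is not a square, and both verify this via the observation that $\beta$ (the paper's $b_3$) divides the denominator to odd order while not dividing the numerator. The only cosmetic difference is packaging: the paper phrases it as ``$a_3,b_2,b_3$ are a transcendence basis and (XIV) is the minimal polynomial of $a_2$,'' whereas you phrase it as ``$F$ is irreducible in $k[a,b,\alpha,\beta]$ and a dimension count forces $\ker = (F)$.''
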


\begin{proof} By Lemma (\ref{lem1}) and Proposition (\ref{field}), $k(X) = k(a_2, a_3, b_2, b_3)$ in $k(Z)$, with a relation 
$${\rm (XII)}\,\, (a_2^2 -b_2)(a_3^2 -b_3^3) = (a_3^2 -b_3)(a_2^2 -b_2^3)
\,\, .$$
Expanding both sides and subtracting then the common term $a_2^2a_3^2$, 
we obtain
$$-a_2^2b_3^3 -b_2a_3^2 + b_2b_3^3 = -a_3^2b_2^3 -b_3a_2^2 + b_3b_2^3\,\,.$$
Solving this relation in terms of $a_2$, we obtain that
$${\rm (XIII)}\,\, a_2^2b_3(1 - b_3^2) = a_3^2b_2(1-b_2^2) + b_2b_3(b_2^2 - b_3^2)\,\, .$$
Since $b_3 = x_3/x_1$ is not a constant in $k(Z)$, it follows that 
$b_3(1 - b_3^2) \not= 0$ in $k(Z)$, whence also not $0$ in $k(X)$. Thus 
$${\rm (XIV)}\,\, a_2^2 = \frac{a_3^2b_2(1-b_2^2) + b_2b_3(b_2^2 - b_3^2)}
{b_3(1 - b_3^2)}\,\, .$$
Therefore $a_2$ is algebraic over $k(a_3, b_2, b_3)$ of degree at most $2$. 
Since $X$ is of dimension $3$ over $k$, it follows that $a_3, b_2, b_3$ form a 
transcendence basis of $k(X)$ over $k$. Thus, the subring $k[a_3, b_2, b_3]$ 
of $k(X)$ is isomorphic to the polynomial ring over $k$ of Krull-dimension $3$. Moreover, the right hand side of (XIV) 
is not a square in $k(a_3, b_2, b_3)$. Indeed, the multiplicity of $b_3$ in the denominator is $1$ while the numerator is not in $k$ and the multiplicity of $b_3$ in the numerator is $0$. 
Thus the equation (XIV) is the minimal polynomial of $a_2$ over $k(a_3, b_2, b_3)$. Hence $X$ is birationally equivalent to the double cover of ${\AAA}^3 = {\rm Spec}\, k[a_3, b_2, b_3]$, defined by (XIV). This means that $X$ is birationally equivalent to the hypersurface in the affine space ${\AAA}^4 = {\rm Spec}\, k[a, \alpha, b, \beta]$, defined by (XIV) or equivalently defined by (XIII) or by (XII), in which $(a_2, a_3, b_2, b_3)$ are replaced by $(a, \alpha, b, \beta)$. 
\end{proof}

\begin{corollary}\label{conic}
Let $H \subset {\AAA}^4 = {\rm Spec}\, k[a, \alpha, b, \beta]$ be the same as in Proposition (\ref{hypersurface}). Consider the affine plane ${\AAA}^2 = {\rm Spec}\, k[b, \beta]$ and the natural projection 
$$\pi : {\AAA}^4 \rightarrow {\AAA}^2$$ 
defined by 
$$(a,b, \alpha, \beta) \mapsto (b, \beta)\,\, .$$ 
Then the natural restriction map
$$p := \pi\vert H : H \rightarrow {\AAA}^2$$ 
is a conic bundle over ${\AAA}^2$. 
In particular, the graph $\Gamma$ of the rational map $\tilde{p} : X \cdots\rightarrow {\PP}^2$ naturally induced by $p$ forms a conic bundle on $\Gamma$ over ${\PP}^2$. We note that $\Gamma$ is projective and birationally equivalent to 
$X$ over $k$. 
\end{corollary}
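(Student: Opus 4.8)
The plan is to read the fibres of $p$ directly off the equation of $H$. Writing the second defining equation of Proposition~\ref{hypersurface} as
$$\beta(1-\beta^2)\,a^2 - b(1-b^2)\,\alpha^2 = b\beta(b^2-\beta^2)\,,$$
one sees that, regarded as a polynomial in the fibre coordinates $(a,\alpha)$, it has degree $2$ and no mixed monomial $a\alpha$. Hence for every $(b_0,\beta_0)\in\AAA^2$ the scheme-theoretic fibre $p^{-1}(b_0,\beta_0)$ is a plane affine conic, and after introducing a homogenising coordinate $w$ one obtains a closed subscheme $\widetilde H\subset{\PP}^2_{[a:\alpha:w]}\times\AAA^2_{b,\beta}$ defined by
$$\beta(1-\beta^2)\,a^2 - b(1-b^2)\,\alpha^2 - b\beta(b^2-\beta^2)\,w^2 = 0\,,$$
with $H=\{w\neq0\}\subset\widetilde H$ and every fibre of $\widetilde H\to\AAA^2$ a plane conic. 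It remains only to check that the generic fibre is smooth.

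Over the function field $k(b,\beta)=k(\AAA^2)$ the generic fibre of $\widetilde H$ is the diagonal ternary conic with coefficient vector $\bigl(\beta(1-\beta^2),\,-b(1-b^2),\,-b\beta(b^2-\beta^2)\bigr)$. Since $\mathrm{char}\,k\neq2$, a diagonal ternary conic is smooth exactly when the product of its three coefficients is nonzero, and here that product is
$$b^2\beta^2(1-b^2)(1-\beta^2)(b^2-\beta^2)\,,$$
which is a nonzero element of $k(b,\beta)$ because $b$ and $\beta$ are algebraically independent over $k$; its zero locus, the six lines $b=0$, $\beta=0$, $b=\pm1$, $\beta=\pm1$, $b=\pm\beta$, is the discriminant of the bundle. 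Thus $p\colon H\to\AAA^2$, as well as its fibrewise projective completion $\widetilde H\to\AAA^2$, is a conic bundle. For the last assertion, recall that by Proposition~\ref{hypersurface} $X$ is a smooth projective threefold and $\widetilde p\colon X\cdots\rightarrow{\PP}^2$ is, by definition, the rational map obtained by composing a birational identification $X\sim H$ with $p$ and the open immersion $\AAA^2\hookrightarrow{\PP}^2$. Then the graph $\Gamma\subset X\times{\PP}^2$ of $\widetilde p$ is closed in a projective variety, hence projective; the first projection $\Gamma\to X$ is an isomorphism over the domain of definition of $\widetilde p$, hence birational, so $\Gamma$ is birational to $X$ over $k$; and the second projection $\Gamma\to{\PP}^2$ is proper and over $\AAA^2\subset{\PP}^2$ it is birationally identified, over $\AAA^2$, with $\widetilde H\to\AAA^2$, so its generic fibre is birational to the smooth conic found above and $\Gamma$ carries a conic bundle structure over ${\PP}^2$.

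Almost everything here is immediate from the shape of the equation; the one point that deserves care is the precise meaning of the phrase ``$\Gamma$ forms a conic bundle over ${\PP}^2$''. The generic fibre is unambiguously (birational to) a smooth conic, but the closure of a graph need not have conic fibres over every point of the base, so if an honest conic bundle morphism onto all of ${\PP}^2$ is wanted one should instead homogenise the equation of $\widetilde H$ in the base coordinates as well, obtaining a hypersurface of bidegree $(2,4)$ in ${\PP}^2_{[a:\alpha:w]}\times{\PP}^2$ all of whose fibres over the second factor are conics; this model is birational to $\Gamma$ over ${\PP}^2$, and one replaces $\Gamma$ by a resolution of it if a smooth total space is also required.
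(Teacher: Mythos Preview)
Your argument is correct and follows the same approach as the paper, which simply observes that the defining equation of $H$ is quadratic in the fibre coordinates $(a,\alpha)$ over the generic point of $\AAA^2$; your treatment is considerably more detailed, explicitly homogenising, verifying smoothness of the generic fibre via the diagonal discriminant, and spelling out the standard facts about the graph $\Gamma$. One trivial slip: the discriminant locus you write down consists of eight lines, not six (two each from $b^2$, $\beta^2$ after counting once, plus two each from $1-b^2$, $1-\beta^2$, $b^2-\beta^2$).
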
 

\begin{proof} The fibre  $\pi^{-1}(\eta)$ of $\pi$ over the generic point $\eta \in {\AAA}^2 = {\rm Spec}\, k[b, \beta]$ is the affine space ${\AAA}^2_{\eta} = {\rm Spec}\, k(b, \beta)[a, \alpha]$ defined over $\kappa(\eta) = k(b, \beta)$. Thus by the second equation in Proposition (\ref{hypersurface}), the generic fibre 
$X_{\eta} := (\pi \vert H)^{-1}(\eta)$ is the conic in ${\AAA}^2_{\eta}$, defined by
$$a^2\beta(1-\beta^2) = \alpha^2b(1-b^2) +b\beta(b^2 -\beta^2)\,\, .$$
This implies the result.
\end{proof}

\begin{remark}\label{rationalpoint}
The conic $X_{\eta}$ in the proof of Proposition (\ref{conic}) has no rational point over $\kappa(\eta) = k(b, \beta)$, i.e., the set $X_{\eta}(k(b, \beta))$ is empty. 
\end{remark}

\begin{proof} Suppose to the contrary that $(a(b, \beta), \alpha(b, \beta)) \in X_{\eta}(k(b, \beta))$. We can write 
$$a(b, \beta) = \frac{P(b, \beta)}{Q(b, \beta)}\,\, ,\,\, \alpha(b, \beta) = \frac{R(b, \beta)}{Q(b, \beta)}\,\, ,$$
where $P(b, \beta), Q(b, \beta), R(b, \beta) \in k[b, \beta]$ with no non-constant common factor, possibly after replacing the denominators by their product. Then substituting the above into the equation of $X_{\eta}$ and clearing the denominator, we would have the following 
identity in $k[b, \beta]$:
$$P(b, \beta)^2\beta(1-\beta^2) = R(b, \beta)^2b(1-b^2) + Q(b, \beta)^2b\beta(b^2 - \beta^2)\,\, .$$
Since $k[b, \beta]$ is a polynomial ring, in particular, it is a UFD, it would follow that $P(b, \beta)$ is divisible by $b$ and $R(b, \beta)$ is divisible by $\beta$ in $k[b, \beta]$. Thus $P(b, \beta) = P_1(b, \beta)b$ and $R(b, \beta) = R_1(b, \beta)\beta$ for some $P_1(b, \beta), R_1(b, \beta) \in k[b, \beta]$. Substituting these two into the equality above and dividing by $b\beta \not= 0$, it follows that
$$P_1(b, \beta)^2b(1 -\beta^2) = R_1(b, \beta)^2\beta(1-b^2) + Q(b, \beta)^2(b^2 - \beta^2)\,\, .$$ 

Substitute $b=0$ into this equation: we obtain $R_1(0,\beta )^2\beta + Q(0,\beta )^2\beta ^2=0$, which implies that $R_1(0,\beta )=Q(0,\beta )=0$. This means that both $R_1(b,\beta )$ and $Q(b,\beta )$ are divisible by $b$. Similarly, if we substitute $\beta =0$ into the above equation we find that both $P_1(b,\beta )$ and $Q(b,\beta )$ are divisible by $\beta$. Thus we can write 
\begin{eqnarray*}
P_1(b,\beta )=\beta P_2(b,\beta ),~R_1(b,\beta )=bR_2(b,\beta ),~Q(b,\beta )=b\beta Q_2(b,\beta ),
\end{eqnarray*}
where $P_2(b,\beta ),~R_2(b,\beta ),~Q(b,\beta )\in k[b,\beta ]$. But this implies that all $P(b,\beta ),Q(b,\beta ),R(b,\beta )$ are divisible by $b\beta $, a contradiction. 
\end{proof}

The next corollary completes the proof of Theorem (\ref{main1}):

\begin{corollary}\label{unirational}
Let $H \subset {\AAA}^4 = {\rm Spec}\, k[a, \alpha, b, \beta]$, $p : H \rightarrow {\AAA}^2 = {\rm Spec}\, k[b, \beta]$ be the same as in Proposition \ref{hypersurface} and 
Corollary \ref{conic}. Consider another affine space ${\rm Spec}\, k[s, t]$ 
and the (finite Galois) morphism of degree $4$
$$f : {\rm Spec}\, k[s, t] \rightarrow {\rm Spec}\, k[b, \beta]$$
defined by 
$$f^*b = s^2\,\, ,\,\, f^*\beta = t^2\,\, .$$
Consider then the fibre product 
$$Q := H \times_{{\rm Spec}\, k[b, \beta]} {\rm Spec}\, k[s, t]$$ 
and the natural second projection $p_2 : Q \rightarrow {\rm Spec}\, k[s, t]$. 
Then $p_2$ is a conic bundle with a rational section and $Q$ is a rational threefold. In particular, $H$, hence $X$, is unirational. 
\end{corollary}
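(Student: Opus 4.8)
The plan is to exploit the base change $f$ to kill the obstruction identified in Remark \ref{rationalpoint}: the generic conic $X_\eta$ fails to have a rational point only because the coefficients $b(1-b^2)$ and $\beta(1-\beta^2)$ are not squares in $k(b,\beta)$. After substituting $b = s^2$, $\beta = t^2$ these become $s^2(1-s^2)(1+s^2)$ and $t^2(1-t^2)(1+t^2)$, so the conic defining $Q$ over $k(s,t)$ reads
$$a^2 t^2(1-t^2)(1+t^2) = \alpha^2 s^2(1-s^2)(1+s^2) + s^2 t^2(s^4 - t^4),$$
and I want to produce an explicit rational point. First I would try the most naive ansatz coming from the geometry: look for a section of the form $a = \lambda(s,t)\, s$, $\alpha = \mu(s,t)\, t$ with $\lambda,\mu$ rational (even constant or polynomial) in $s,t$. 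Plugging in and cancelling $s^2 t^2$, the equation becomes $\lambda^2(1-t^2)(1+t^2) = \mu^2(1-s^2)(1+s^2) + (s^4 - t^4)$; taking $\lambda = \mu = $ (a suitable constant, I expect $\pm 1$ up to sign bookkeeping with $\sqrt{-1}\in k$) should make the two sides agree identically, since $(1-t^4) - (1-s^4) = s^4 - t^4$. This gives a rational section $\sigma : \operatorname{Spec} k[s,t] \dashrightarrow Q$ of $p_2$.

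Next I would invoke the standard fact that a conic bundle over a rational base which admits a rational section is birationally trivial: the rational section $\sigma$ gives the generic fibre $Q_\zeta$ a rational point over $\kappa(\zeta) = k(s,t)$, hence $Q_\zeta \cong \mathbb{P}^1_{k(s,t)}$, and therefore $Q$ is birational over $k(s,t)$ to $\mathbb{P}^1 \times_{k(s,t)} \operatorname{Spec} k(s,t)$; combined with $k(s,t)$ being rational over $k$ of transcendence degree $2$, this shows $Q$ is a rational threefold, $k(Q) \cong k(s,t,r)$ for a new indeterminate $r$. Finally, by construction $Q = H \times_{\operatorname{Spec} k[b,\beta]} \operatorname{Spec} k[s,t]$, so the first projection $Q \to H$ is a dominant (indeed finite, generically $4{:}1$) morphism; composing $\mathbb{P}^3 \dashrightarrow Q \to H$ and using $H \sim_{\mathrm{bir}} X$ from Proposition \ref{hypersurface} exhibits the desired dominant rational map $\mathbb{P}^3 \dashrightarrow X$, proving Theorem \ref{main1}.

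The one genuinely delicate point is verifying that the constant-section ansatz actually works, i.e.\ that the identity $\lambda^2(1-t^4) = \mu^2(1-s^4) + (s^4 - t^4)$ is solvable with $\lambda, \mu \in k$; if the naive constants do not match (because of the placement of the sign or of the factor $\sqrt{-1}$), I would allow $\lambda, \mu$ to be polynomials in $s,t$ of low degree, or equivalently look for a section with $a, \alpha$ linear forms in $s, t$, and solve the resulting small linear-algebra system — this still only uses that $k$ contains $\sqrt{-1}$ and $\operatorname{char} k \neq 2$, exactly the running hypothesis. Everything else (conic bundle with a section is birationally $\mathbb{P}^1\times(\text{base})$, fibre products give dominant maps, transitivity of dominant rational maps) is routine and formal, so the section is where the real content of this last step lies.
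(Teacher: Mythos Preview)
Your proposal is correct and follows essentially the same route as the paper: after the base change $b=s^2$, $\beta=t^2$ the paper exhibits the rational point $(a,\alpha)=(s,t)$ on the generic conic $Q_{\eta'}$, which is precisely your ansatz with $\lambda=\mu=1$ (and your own identity $(1-t^4)-(1-s^4)=s^4-t^4$ already verifies this, so the hedging is unnecessary). The remaining steps---conic with a rational point over $k(s,t)$ is $\mathbb{P}^1_{k(s,t)}$, hence $k(Q)\cong k(s,t,v)$, and the degree-$4$ first projection $Q\to H$ yields the dominant $\mathbb{P}^3\dashrightarrow X$---match the paper exactly.
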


\begin{proof} Recall that $H$ is the hypersurface in ${\rm Spec}\, k[a, b, \alpha, \beta]$ defined by
$$a^2\beta(1-\beta^2) = \alpha^2b(1-b^2) +b\beta(b^2 -\beta^2)\,\, ,$$
or equivalently by
$$(a^2 -b)(\alpha^2 - \beta^3) = (\alpha^2 - \beta)(a^2 - b^3)\,\, .$$
Thus, by  definition of the fibre product, $Q$ is a hypersurface 
in the affine space ${\AAA}^4 = {\rm Spec}\, k[a, \alpha, s, t]$, 
defined by
$$a^2t^2(1-t^4) = \alpha^2s^2(1-s^4) +s^2t^2(s^4 -t^4)\,\, ,$$
or equivalently by
$$(a^2 -s^2)(\alpha^2 - t^6) = (\alpha^2 - t^2)(a^2 - s^6)\,\, .$$
Then the natural projection $p_2 : Q \rightarrow {\rm Spec}\, k[s, t]$ is a conic bundle with generic fibre
$$Q_{\eta'} = (a^2t^2(1-t^4) = \alpha^2s^2(1-s^2) +s^2t^2(s^4 -t^4)) \subset {\rm Spec}\, k(s, t)[a, \alpha] = {\AAA}^2_{\eta'}\,\, ,$$
where $\eta'$ is the generic point of ${\rm Spec}\, k[s, t]$. 
Then $Q_{\eta'}$ has a rational point $(a, \alpha ) = (s, t) \in Q(k(s, t))$ over $\kappa(\eta') = k(s, t)$. Hence $Q_{\eta'}$ is isomorphic to ${\PP}^1_{\eta'}$ over $k(s, t)$. Thus, denoting the affine coordinate of ${\PP}^1_{\eta'}$ by $v$, we obtain that 
$$k(Q) = k(s, t)(Q_{\eta'}) \simeq k(s, t)({\PP}^1_{\eta'}) = k(s, t)(v) = k(s, t, v)\,\, .$$ Since $Q$ is of dimension $3$ over $k$, it follows that $s$, $t$, $v$ are algebraically independent over $k$. Hence, $k(Q)$ is isomorphic to the rational function field of ${\PP}^3$ over $k$. Hence $Q$ is 
a rational threefold over $k$, i.e., birationally equivalent to ${\PP}^3$ over 
$k$. Since the natural morphism $p_1 : Q \rightarrow H$, i.e., the first projection morphism in the fibre product, is a finite dominant morphism of degree $4$, $Q$ is birational to ${\PP}^3$ and $H$ is birationally equivalent to $X$, all over $k$, we obtain a rational dominant map $q : {\PP}^3 \cdots\rightarrow X$ over $k$, from 
the natural projection $p_1 : Q \rightarrow H$. Hence $X$ is unirational. 
\end{proof}

\end{document}